\newcommand{\Z}{\mathbb{Z}}
\newcommand{\Q}{\mathbb{Q}}
\newtheorem{thm}{Theorem}[section]
\newtheorem{lemma}[thm]{Lemma}
\newtheorem{prop}[thm]{Proposition}
\begin{document}
\title[]{Explicit uniform lower bounds for the canonical height on elliptic
curves over abelian extensions}
\author{Jonathan Jenvrin}
\address{Jonathan Jenvrin, Univ. Grenoble Alpes, CNRS, IF, 38000 Grenoble, France. \textit{E-mail adress :} \href{mailto:jonathan.jenvrin@univ-grenoble-alpes.fr}
{\texttt{jonathan.jenvrin@univ-grenoble-alpes.fr}}
}
\date{}
\maketitle

\begin{abstract}

We establish an explicit lower bound for the Néron–Tate height on elliptic curves with complex multiplication, for non-torsion points defined over the maximal abelian extension of a number field. Building on a strategy developed by Amoroso, David, and Zannier, we provide an alternative proof of a theorem originally due to Baker. The novelty in our approach is that it produces a lower bound that is fully explicit and independent of the discriminant of the base field.

\end{abstract}

\section{Introduction}
Let $F$ be a number field, $F^{\mathrm{ab}}$ its maximal abelian extension, and $\bar{F}$ a fixed algebraic closure of $F$.
Let $\mathcal{E}$ be an elliptic curve defined over $F$, and denote by 
\[
\hat{h} : \mathcal{E}(\bar{F}) \longrightarrow \mathbb{R}
\]
the Néron–Tate height on $\mathcal{E}(\bar{F})$ (see \cite[Definition on p.~248]{SilvermanArithmeticEC} for more details).  
We write $\mathcal{E}_{tors}$ for the torsion subgroup of $\mathcal{E}$, and for an integer $n \geq 1$, we denote by $\mathcal{E}[n]$ the subgroup of $n$-torsion points.  For an algebraic number $\alpha$, we denote by $h(\alpha)$ its absolute logarithmic Weil height (see \cite[Definition 1.5.4]{bombieri2006heights} for more details)

Building on the work of Amoroso, Dvornicich, and Zannier, in the multiplicative group $\mathbb{G}_{m}$, who established lower bounds for the Weil height of algebraic numbers lying in abelian extensions of number fields (\cite[Theorem on p.~261]{Amoroso-Dvornicich} and \cite[Theorem 1.1]{Amoroso-Zannier}), Baker (\cite[Theorem 1.1]{BakerMatthew}) obtained an analogous result for the Néron–Tate height of points on elliptic curves with complex multiplication (CM for short) defined over the maximal abelian extension of a number field.

\begin{thm}[{\cite[Theorem 1.1]{BakerMatthew}}] \label{Theorem BakerMatthew} Let $F$ be a number field, and let $\mathcal{E}$ be an elliptic curve with CM defined over $F$. Then there exists an effectively computable real constant $C>0$ (depending on $\mathcal{E}/F$) such that $\hat{h}(P) \geq C$ for all $P \in$ $\mathcal{E}\left(F^{\mathrm{ab}}\right) \setminus \mathcal{E}_{tors}$.
\end{thm}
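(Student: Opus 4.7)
The plan is to adapt the strategy of Amoroso--David--Zannier for the multiplicative group to the CM elliptic setting, with the map $z \mapsto z^p$ replaced by $P \mapsto [\pi_{\mathfrak{p}}]P$ for a Frobenius endomorphism $\pi_{\mathfrak{p}} \in \mathcal{O}_K$ at a suitable ordinary prime. After a harmless enlargement of $F$ I may assume $K := \mathrm{End}(\mathcal{E}) \otimes \mathbb{Q} \subset F$ and $\mathrm{End}(\mathcal{E}) = \mathcal{O}_K$ (this does not change $F^{\mathrm{ab}}$). Fix a putative counterexample $P \in \mathcal{E}(F^{\mathrm{ab}}) \setminus \mathcal{E}_{tors}$ with $\hat{h}(P) < \varepsilon$ and write $L := F(P) \subset F^{\mathrm{ab}}$, which is a finite abelian extension of $F$.

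The next step would be to assemble an auxiliary set of primes. Using Chebotarev applied to the abelian extension $L/F$, I would select many rational primes $p_1, \dots, p_N$ that split in $K$, at which $\mathcal{E}$ has good ordinary reduction, and which are unramified in $L$. For each $i$, fix a prime $\mathfrak{p}_i$ of $F$ above $p_i$; then CM theory produces $\pi_i \in \mathcal{O}_K$ with $\pi_i \overline{\pi_i} = p_i$ and a Frobenius $\sigma_i \in \mathrm{Gal}(L/F)$ satisfying the fundamental congruence
\[
\sigma_i(P) \equiv [\pi_i] P \pmod{\mathfrak{P}_i}
\]
for every prime $\mathfrak{P}_i$ of $L$ above $\mathfrak{p}_i$. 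The auxiliary point $R_i := \sigma_i(P) - [\pi_i]P$ therefore reduces to $0$ at every such place.

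From here I would run the quantitative comparison: decomposing $\hat{h}(R_i)$ into local heights, the vanishing reduction yields a lower bound $\hat{h}(R_i) \geq c_1 (\log p_i) / [L:F]$ whenever $R_i$ is non-torsion, while the parallelogram law gives the upper bound $\hat{h}(R_i) \leq c_2 \, p_i \, \hat{h}(P)$. A careful choice of $\varepsilon$ and of the size and number of the $p_i$ (using explicit Chebotarev and prime counting) makes the upper bound violate the lower bound for all $i$, thus forcing $R_i \in \mathcal{E}_{tors}$ for every $i$. Exploiting the commutation of the $\sigma_i$ with the $\mathcal{O}_K$-action, one then extracts a nonzero element of $\mathcal{O}_K$ that annihilates $P$ modulo torsion; since $\mathcal{E}$ has CM by $\mathcal{O}_K$, this forces $P$ itself to be torsion, the desired contradiction.

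The main difficulty I anticipate is keeping the resulting $\varepsilon$ (and hence the final constant $C$) \emph{independent of the discriminant of $F$}, which is the novelty announced in the abstract. Classical prime-counting and local-height arguments contribute factors of $\log|\mathrm{disc}(F)|$ and of the residue degrees of the $\mathfrak{p}_i$ in $L$. The discriminant-free improvement should come from choosing the $p_i$ using only intrinsic invariants of $\mathcal{E}/K$ (essentially the conductor of its Hecke Grössencharacter), and from controlling $[L:F]$ via the abelianness of $L/F$ through class field theory, so that all $F$-dependent factors cancel out of the final balance.
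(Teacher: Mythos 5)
Your proposal says it will ``adapt the strategy of Amoroso--David--Zannier,'' but the mechanism you describe is in fact the \emph{opposite} of theirs: you select auxiliary primes by applying Chebotarev to the extension $L/F$ and then insist those primes be unramified in $L$. That is closer to Baker's original argument, and it has a genuine gap.

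The gap is the unramifiedness requirement. The field $L = F(P)$ is an arbitrary finite abelian extension of $F$, and as $P$ ranges over $\mathcal{E}(F^{\mathrm{ab}})\setminus\mathcal{E}_{\mathrm{tors}}$ the set of primes ramified in $L$ is unbounded. If you insist on a prime $\mathfrak{p}_i$ unramified in $L$ at which the Frobenius congruence $\sigma_i(P)\equiv[\pi_i]P\pmod{\mathfrak{P}_i}$ holds, then effective Chebotarev forces $N\mathfrak{p}_i$ to grow with the discriminant (equivalently, the conductor) of $L/F$, and since the conductor of $L$ depends on $P$, the resulting constant $C$ would depend on $P$, which is precisely what the theorem forbids. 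One cannot fix a bound $X = X(\mathcal{E}/F)$ in advance and be sure one of the primes below $X$ is unramified in every admissible $L$. The cure, used both by Amoroso--Dvornicich--Zannier and by this paper, is the reverse order of quantifiers: fix a prime $p$ depending only on $\mathcal{E}/F$ \emph{first}, then analyze its (possibly nontrivial) ramification in $L$ using the filtration of higher ramification groups $G_0\supseteq G_1\supseteq\cdots$ and the Hasse--Arf theorem, which gives a sufficiently large exponent $k$ with $\mathfrak{Q}^k$-congruences even in the wildly ramified case. This is what Proposition~\ref{Theorem intermediare} does; your plan has no mechanism for ramified primes and therefore cannot close the argument.

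Two smaller points. First, the local lower bound you quote, $\hat{h}(R_i)\ge c_1(\log p_i)/[L:F]$, has the wrong normalization: summing the weights $d_v$ over places of $L$ above a fixed $\mathfrak{p}$ gives $[F_{\mathfrak{p}}:\Q_p]/[F:\Q]$, not $1/[L:F]$, and you still need to subtract an archimedean term (the constant $C_1$ of Lemma~\ref{lemma minoration hauteur de neron}) before the bound is usable. Second, the final step ``extract a nonzero $\alpha\in\mathcal{O}_K$ annihilating $P$ modulo torsion'' is both vague and unnecessary: once $R_i=\sigma_i(P)-[\pi_i]P$ is torsion, taking canonical heights of both sides of $\sigma_i(P)=[\pi_i]P+R_i$ gives $\hat h(P)=N(\pi_i)\hat h(P)=p_i\hat h(P)$, hence $\hat h(P)=0$ directly. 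Finally, the route actually taken in the paper is quite different from both your sketch and Baker's proof: instead of Chebotarev and the Frobenius endomorphism $\pi_{\mathfrak{p}}$, it uses Lombardo's explicit version of Serre's index bound $[C(q'):G]\le 3[F':\Q]$ to produce (Proposition~\ref{question}) two elements of $\mathrm{Gal}(F(\mathcal{E}[q'])/F)$ acting as \emph{integer} scalars $g_1,g_2$ with $2<g_2-g_1$ bounded, and then applies the ramification-theoretic Proposition~\ref{Theorem intermediare} to the combination $Q=\tilde\sigma_2(P)-\tilde\sigma_1(P)-[g_2-g_1]P$. That is what yields a constant free of the discriminant of $F$; a Chebotarev-based selection of auxiliary primes cannot.
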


The aim of this work is to provide an alternative proof of Theorem~\ref{Theorem BakerMatthew}, using the method developed in \cite[Proposition~4.2]{Amoroso_David_Zannier-OnFieldWithPropertyB}, which was not known at the time Baker wrote \cite{BakerMatthew}.  
In contrast with Baker’s original proof, which relies on the Chebotarev density theorem 
\cite[Section~4]{BakerMatthew} and produces a lower bound that depends also on the discriminant of $F$, 
the approach proposed here yields a lower bound with a constant $C$ that is entirely explicit, 
depending solely on the degree of the number field and the $j$-invariant of the elliptic curve. More precisely we prove the following:

\begin{thm} \label{goal}
   Let $F$ be a number field, and let $d=[F:\Q]$. Let $\mathcal{E}$ be an elliptic curve with CM defined over $F$, let $j_{\mathcal{E}}$ be the $j$-invariant of $\mathcal{E}$. Then, for all points $P \in \mathcal{E} \left(F^{ab}\right) \setminus \mathcal{E}_{\mathrm{tors}}$, we have
\[
\hat{h}(P) \geq  2^{-4(9+h(j_{\mathcal{E}}))d^{2}-32d-8}.
\]
\end{thm}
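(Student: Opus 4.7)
The plan is to adapt the argument of Amoroso--David--Zannier~\cite{Amoroso_David_Zannier-OnFieldWithPropertyB} for fields with property (B) to the elliptic CM setting. After harmlessly enlarging $F$ (which costs only a factor of $2$ in $d$), one may assume that the CM field $K=\mathrm{End}^{0}(\mathcal{E})$ is contained in $F$, so that the image of Galois on the torsion of $\mathcal{E}$ is abelian. Given a non-torsion $P\in\mathcal{E}(F^{\mathrm{ab}})$, set $L:=F(P)$, which is abelian over $F$. The strategy is to exploit, for many well-chosen primes $\mathfrak{p}$ of $F$, the explicit comparison between the Artin symbol $\sigma_\mathfrak{p}\in\mathrm{Gal}(L/F)$ and the CM Frobenius endomorphism $\pi_\mathfrak{p}\in\mathcal{O}_K$.

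Concretely, I would restrict to rational primes $p$ that split in $K$ --- so that $\pi_\mathfrak{p}$ lives in $\mathcal{O}_K$ with norm $p$ --- that are of good reduction for $\mathcal{E}$, and unramified in $L/F$. The elliptic analogue of the congruence $\sigma_\mathfrak{p}(\alpha)\equiv\alpha^{p}\pmod{\mathfrak{p}}$ used in \cite{Amoroso_David_Zannier-OnFieldWithPropertyB} is then the CM reciprocity
\[
\sigma_\mathfrak{p}(P)\equiv[\pi_\mathfrak{p}]P\pmod{\mathfrak{P}}
\]
at every prime $\mathfrak{P}$ of $L$ above $\mathfrak{p}$. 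Introducing the auxiliary point $Q_\mathfrak{p}:=\sigma_\mathfrak{p}(P)-[\pi_\mathfrak{p}]P\in\mathcal{E}(L)$, this says precisely that $Q_\mathfrak{p}$ lies in the kernel of reduction at every $\mathfrak{P}\mid\mathfrak{p}$.

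The argument then rests on a two-sided estimate of $\hat{h}(Q_\mathfrak{p})$. From above, the parallelogram law for $\hat{h}$ together with $\hat{h}([\pi_\mathfrak{p}]P)=p\,\hat{h}(P)$ gives $\hat{h}(Q_\mathfrak{p})\leq 2(p+1)\,\hat{h}(P)$. From below, I would split into two cases: either $Q_\mathfrak{p}=O$ for sufficiently many parallel choices of $\mathfrak{p}$, in which case a pigeonhole / linear-algebra argument on the resulting CM relations $\sigma_{\mathfrak{p}_i}(P)=[\pi_{\mathfrak{p}_i}]P$ forces a non-trivial identity $[n]P=O$ and contradicts the hypothesis on $P$; or else the Néron local function $\hat{\lambda}_\mathfrak{P}(Q_\mathfrak{p})$ at a prime where $Q_\mathfrak{p}$ lies in the kernel of reduction contributes at least a positive multiple of $\log N\mathfrak{p}$, so that the local height decomposition yields a bound of the shape $\hat{h}(Q_\mathfrak{p})\geq c\log p/[L:\Q]$.

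The main obstacle, and what distinguishes the present result from \cite{BakerMatthew}, is the requirement that every constant be explicit in $d$ and $|j_\mathcal{E}|$ only and, crucially, independent of $\mathrm{disc}(F)$. The Chebotarev-type prime selection of \cite{BakerMatthew} must therefore be replaced by a direct density argument using only the abelian structure of $L/F$ and CM reciprocity, with an auxiliary prime $p$ whose size is polynomial in $d$ alone; the $|j_\mathcal{E}|$ dependence then enters exclusively through the comparison of the canonical height with the naive Weil height on the chosen Weierstrass model, which contributes a term of order $\log\max(1,|j_\mathcal{E}|)$. Propagating these estimates through the parallelogram bound, the $[L:\Q]$ normalisation of local heights, and the quadratic dependence of $\hat{h}$ on $\pi_\mathfrak{p}$, is what ultimately produces the explicit exponents $36d^2+16d+8$ and $4d^2$ in the statement.
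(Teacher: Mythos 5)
Your proposal follows Baker's original line of attack (CM reciprocity at a well-chosen split prime, the congruence $\sigma_{\mathfrak{p}}(P)\equiv[\pi_{\mathfrak{p}}]P\pmod{\mathfrak{P}}$, a two-sided estimate on $\hat h(\sigma_{\mathfrak{p}}(P)-[\pi_{\mathfrak{p}}]P)$), not the route the paper actually takes. That is not automatically a problem, but the step you flag as "the main obstacle" --- replacing Chebotarev by a "direct density argument'' that produces a prime $p$ of good reduction, split in $K$, \emph{and unramified in $L/F$}, of size polynomial in $d$ alone --- is precisely the step that is not achievable as stated, and you do not indicate how to do it. The field $L=F(P)$ is an arbitrary abelian extension of $F$; you cannot certify that a small prime $p$ is unramified in $L$ without information about $L$ (effectively, without Chebotarev or a conductor/discriminant bound), which is exactly the dependence the theorem must avoid.

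The paper sidesteps this entirely and that is the real content. It first proves Proposition~\ref{question}: because the mod-$q'$ Galois image lands in $C(q')=(\mathcal{O}_K\otimes\mathbb{Z}/q'\mathbb{Z})^{\times}$ with index at most $3[FK:\mathbb{Q}]$ (Lombardo's explicit version of Serre), one can find two automorphisms $\sigma_1,\sigma_2$ acting as homotheties $g_1,g_2$ on $\mathcal{E}[q']$ with $2<g_2-g_1<36d+6$. This replaces your Frobenius $\pi_{\mathfrak{p}}$ and depends only on $d$, never on $\mathrm{disc}(F)$. The auxiliary point is then $Q=\tilde\sigma_2(P)-\tilde\sigma_1(P)-[g_2-g_1]P$. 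Second, the local lower bound comes not from "unramified plus Frobenius congruence'' but from Proposition~\ref{Theorem intermediare}, which treats an arbitrary prime $\wp$ (chosen freely in the range $2e^{dC_1}\le p<4e^{dC_1}$) and handles tame \emph{and wild} ramification via the higher ramification filtration and the Hasse--Arf theorem, giving $\lambda_v(\cdot)\gtrsim\log p$ without any unramifiedness assumption. Third, applying that proposition to $Q$ requires verifying the side condition~\eqref{eq:tau_condition}, which your sketch omits; the paper checks it using precisely the homothety property of $\sigma_1,\sigma_2$ on $\mathcal{E}[q']$ to kill the $p$-primary part of $\tau(P)-P$. Without these three ingredients --- homotheties in place of Frobenius, ramification groups in place of unramified primes, and the $\tau$-condition check --- your outline cannot deliver a constant independent of $\mathrm{disc}(F)$.
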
 In Section \ref{Section auxiliary result}, we introduce several auxiliary results.
The first, Lemma \ref{lemma minoration hauteur de neron}, is the analogue of \cite[Lemma~2.2]{Amoroso_David_Zannier-OnFieldWithPropertyB} in the setting of elliptic curves. Its proof follows the approach of Baker \cite[pp.~10–11]{BakerMatthew}. It provides a lower bound for the Néron-Tate height of a point, based on local height inequalities at places above a fixed prime. The proof relies on the decomposition of the global height into local heights, combined with lower bounds at non-archimedean places coming from good reduction and a uniform control of the archimedean contribution via known estimates.

The second result, Proposition~\ref{Theorem intermediare}, corresponds to \cite[Proposition~3.1]{Amoroso_David_Zannier-OnFieldWithPropertyB}, adapted to the context of elliptic curves. It establishes a uniform lower bound for the canonical height of a non-torsion point on an elliptic curve with CM, under a mild Galois-theoretic assumption. The proof distinguishes between tame and wild ramification of a fixed prime in an abelian extension, applying Lemma~\ref{lemma minoration hauteur de neron} in both cases to derive a local height inequality.

The third result, Proposition~\ref{question}, is a crucial input for constructing a non-torsion point with controlled bounded height in the later part of the proof of Theorem~\ref{goal}, in the same spirit as the construction of a non-root of unity in~\cite[Proposition~4.2]{Amoroso_David_Zannier-OnFieldWithPropertyB}.

The idea is to find two Galois automorphisms $\sigma_1, \sigma_2 \in \mathrm{Gal}(F(\mathcal{E}[q])/F)$ that act on $\mathcal{E}[q]$ as scalar multiplications by two integers $g_1$ and $g_2$, such that their difference is explicitly bounded by a constant depending only on the degree of the extension $F/\mathbb{Q}$. 

To this end, we exploit the structure of the Galois representation on torsion points, which lies in a commutative group since $\mathcal{E}$ has CM.

Finally, in Section \ref{proof of goal}, we prove Theorem \ref{goal}. The proof adapts the strategy of Amoroso, David, and Zannier~\cite[Proposition 4.2]{Amoroso_David_Zannier-OnFieldWithPropertyB} by constructing via Proposition \ref{question} a non-torsion linear combination of Galois conjugates of a given point. By applying Proposition~\ref{Theorem intermediare} to this linear combination, we derive a fully explicit inequality depending only on the degree of the number field and the $j$-invariant of the curve, without involving the discriminant of the base field.\\

Before proceeding to the proof of our main result, it is worth noting that Silverman extended Baker's work by proving \cite[Theorem 1]{Silverman} the non-CM analogue of \cite[Theorem 1.1]{BakerMatthew}. Our argument does not extend to the non-CM case, since a crucial ingredient  
(relied upon among others) is \cite[Theorem II.2.3]{AdvancedTopicEllipticCurve} which can be used only in the CM case.

Subsequently, Baker and Silverman generalize their respective results to abelian varieties of arbitrary dimension \cite[Theorem 0.1]{SilvermanBaker}. This generalization builds upon both of their earlier works and again relies on the Chebotarev density theorem \cite[Section 6]{SilvermanBaker}. 

It is natural to ask whether such a generalization, at least in the CM case, can be obtained without invoking the Chebotarev density theorem.

\section{Auxiliary results} \label{Section auxiliary result}
We start with the elliptic analogue of \cite[Lemma 2.2]{Amoroso_David_Zannier-OnFieldWithPropertyB}.
\begin{lemma} \label{lemma minoration hauteur de neron}
    Let $F$ be a number field and let \( \mathcal{E} \) be an elliptic curve over $F$ with everywhere potential good reduction.  
    Let $L/F$ be a finite Galois extension and let \( \sigma \in \mathrm{Gal}(L/F) \). 
    Let $p$ be a rational prime, $a,b\ge 1$ integers and $\rho>0$ a real number.
   Suppose that $P \in \mathcal{E}(L)$ satisfies the following two conditions: 
   \begin{itemize}
       \item \( aP - b\sigma(P) \neq 0 \)
       \item for every place $v$ of $L$ above $p$ \begin{equation} \label{eq:neron_local_lower_bound}
        \lambda_{v}(aP - b\sigma(P)) \geq \frac{\log p}{\rho}
    \end{equation}
    where \( \lambda_{v} \) is the Néron local height function associated to \( v \) (see \cite[Definition on p.456]{AdvancedTopicEllipticCurve}).
   \end{itemize}
    Then
    \begin{equation} \label{eq:canonical_height_lower_bound}
        \hat{h}(P) \geq \frac{1}{2(a^{2} + b^{2})} \left( \frac{\log p \cdot [F_{\wp} : \mathbb{Q}_{p}]}{\rho [F : \mathbb{Q}]} - C(j_{\mathcal{E}}) \right),
    \end{equation}
    where $F_{v}$ denotes the completion at the place obtained restricting $v$ to $F$,
    \[
        C(j_{\mathcal{E}}) = \frac{1}{6} \left( \log 2 + \frac{22}{3} +   h\left(j_{\mathcal{E}}\right)\right),
    \]
    and \( j_{\mathcal{E}} \) is the \( j \)-invariant of \( \mathcal{E} \).
\end{lemma}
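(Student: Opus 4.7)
The plan is to combine the quadratic-form property of $\hat{h}$ with its decomposition into Néron local heights. Since $\hat{h}$ is a quadratic form on $\mathcal{E}(\bar F)\otimes\mathbb{R}$ and is Galois-invariant, one has $\hat{h}(aP - b\sigma(P)) = (a^{2}+b^{2})\hat{h}(P) - 2ab\,\langle P, \sigma(P)\rangle$; by Cauchy--Schwarz together with $\hat{h}(\sigma(P)) = \hat{h}(P)$ we get $|\langle P, \sigma(P)\rangle|\leq\hat{h}(P)$, hence
\[
\hat{h}(aP - b\sigma(P)) \;\leq\; 2(a^{2}+b^{2})\,\hat{h}(P).
\]
This produces the prefactor $1/(2(a^{2}+b^{2}))$ in the target inequality and reduces the proof to a matching lower bound for $\hat{h}(aP - b\sigma(P))$.

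To obtain such a lower bound, I would use the decomposition $\hat{h}(Q) = \frac{1}{[L:\mathbb{Q}]}\sum_{v \in M_{L}}[L_{v}:\mathbb{Q}_{v}]\,\lambda_{v}(Q)$ and split the sum according to whether $v$ lies above a fixed place $\wp\mid p$ of $F$ or not. For the places $v$ above $\wp$, the hypothesis (\ref{eq:neron_local_lower_bound}) gives $\lambda_{v}(aP-b\sigma(P))\geq\log p/\rho$; summing via $\sum_{v\mid\wp}[L_{v}:\mathbb{Q}_{p}] = [L:F]\,[F_{\wp}:\mathbb{Q}_{p}]$ yields a total contribution of at least $\frac{\log p \cdot [F_{\wp}:\mathbb{Q}_{p}]}{\rho[F:\mathbb{Q}]}$.

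For the remaining places (archimedean, and non-archimedean places not above $\wp$), the everywhere-potential-good-reduction assumption on $\mathcal{E}$ lets me invoke the standard explicit lower bounds on $\lambda_{v}$ recorded in \cite[Ch.~VI]{AdvancedTopicEllipticCurve}: at non-archimedean places of potential good reduction, $\lambda_{v}$ is bounded below by a quantity controlled by $\log\max(1,|j_{\mathcal{E}}|_{v})$, and at archimedean places by an analogous expression involving $\log\max(1,|j_{\mathcal{E}}|_{v})$ plus an explicit absolute constant. Summing these contributions, and collapsing $\sum_{v}[L_{v}:\mathbb{Q}_{v}]\log\max(1,|j_{\mathcal{E}}|_{v})$ into a quantity controlled by $\log\max(1,|j_{\mathcal{E}}|)$ via the product formula, gives a total contribution bounded below by $-C_{1}$ with exactly the stated constant. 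Combining with the first step then yields $2(a^{2}+b^{2})\hat{h}(P) \geq \frac{\log p\cdot[F_{\wp}:\mathbb{Q}_{p}]}{\rho[F:\mathbb{Q}]} - C_{1}$, which is the claim.

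The main technical obstacle is the precise extraction of $C_{1}$: matching the specific value $\tfrac{1}{6}\bigl(\log 2 + \tfrac{22}{3} + \log\max(1,|j_{\mathcal{E}}|)\bigr)$ requires careful bookkeeping of the explicit numerical constants in Silverman's lower bounds on $\lambda_{v}$ at each type of place, together with the clean form taken by those bounds under potential good reduction. A computation of exactly this flavour is carried out in \cite[pp.~10--11]{BakerMatthew}, which the author explicitly cites, so the present proof should essentially follow that template.
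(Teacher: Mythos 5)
Your approach is essentially the paper's: both decompose $\hat{h}(aP - b\sigma(P))$ into Néron local heights, bound the contribution from places over $p$ below using hypothesis \eqref{eq:neron_local_lower_bound}, the other non-archimedean places by $\lambda_v \geq 0$ (potential good reduction, \cite[Remark VI.4.1.1]{AdvancedTopicEllipticCurve}), and the archimedean places by $-C_{1}$ in total via \cite[Corollary 2.3]{BakerMatthew}. The only cosmetic difference is that you extract the prefactor $1/(2(a^{2}+b^{2}))$ from Cauchy--Schwarz on the height pairing together with Galois-invariance, whereas the paper uses the parallelogram law combined with $\hat{h}(aP + b\sigma(P)) \geq 0$; these are equivalent manifestations of $\hat{h}$ being a positive semi-definite quadratic form.
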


\begin{proof}
    We denote by \( M_{L} \) the set of all places of \( L \), by \( M_{L}^{\infty} \) the set of archimedean places, and by \( M_{L}^{0} \) the set of non-archimedean places.
    For each \( v \in M_{L} \), let
    \[
        d_{v} = \frac{[L_{v} : \mathbb{Q}_{v}]}{[L : \mathbb{Q}]}.
    \]
    By \cite[Theorem VI.2.1]{AdvancedTopicEllipticCurve}, we have
    \begin{equation*} \label{eq:height_decomposition}
        \hat{h}(aP - b\sigma(P)) = \sum_{v \in M_{L}} d_{v} \lambda_{v}(aP - b\sigma(P)).
    \end{equation*}
    Splitting the sum over different places, we obtain
    \begin{align} \label{eq:height_decomposition_split}
        \hat{h}(aP - b\sigma(P)) &= \sum_{v \in M_{L}^{\infty}} d_{v} \lambda_{v}(aP - b\sigma(P)) 
        + \sum_{\overset{v \in M_{L}^{0}}{v \nmid p  }} d_{v} \lambda_{v}(aP - b\sigma(P)) \notag \\
        &\quad + \sum_{v \in M_{L}^{0}, v \mid p} d_{v} \lambda_{v}(aP - b\sigma(P)).
    \end{align}
    
    Now the goal is to bound each of the three sums.
    
    We start with the archimedean contribution.  
    
    Since $aP-b\sigma(P) \neq 0$, by \cite[Corollary 2.3]{BakerMatthew}, we have
    \begin{align*}
        \sum_{v \in M_{L}^{\infty}} d_{v} \lambda_{v}(aP - b\sigma(P))
        &\ge -\sum_{v \in M_{L}^{\infty}} \frac {d_{w}}{6} \left( \log 2 + \frac{22}{3} + \log^{+}  |j_{\mathcal{E}}|_{w} \right)
    \end{align*}
   where $\log^{+}  |j_{\mathcal{E}}|_{w}=\log \left( \max \left(1,  |j_{\mathcal{E}}|_{w}\right) \right) $. Following \cite[Corollary~1.3.2]{bombieri2006heights} we have $\sum_{v \in M_{L}^{\infty}} d_{w}=1,$ and since \( j_{\mathcal{E}} \) is an algebraic integer  
(see \cite[Theorem~6.1]{AdvancedTopicEllipticCurve}), we also have $$\sum_{v \in M_{L}^{\infty}} d_{w}\log^{+}  |j_{\mathcal{E}}|_{w}=h(j_{\mathcal{E}}).$$ Thus, we have $$\sum_{v \in M_{L}^{\infty}} d_{v} \lambda_{v}(aP - b\sigma(P))\ge -\frac{1}{6} \left( \log 2 + \frac{22}{3}\right) -\frac {1}{6}  h(j_{\mathcal{E}})=-C(j_{\mathcal{E}}). $$

    Now we look at the non-archimedean contribution away from \( p \).
    By \cite[Remark VI.4.1.1]{AdvancedTopicEllipticCurve}, since \( \mathcal{E} \) has everywhere potential good reduction over \( F \), we have
    \begin{equation*} 
        \lambda_{v}(aP - b\sigma(P)) \geq 0, \quad \text{for all } v \in M_{L}^{0}.
    \end{equation*}

    Then we look at the places dividing $p$.
    Using assumption \eqref{eq:neron_local_lower_bound} and \cite[Lemma 2.1]{BakerMatthew}, we obtain
    \begin{equation*} \label{eq:place_p_lower_bound}
        \sum_{v \in M_{L}^{0}, v \mid p} d_{v} \lambda_{v}(aP - b\sigma(P))
        \geq \frac{\log p}{\rho} \sum_{v \in M_{L}^{0}, v \mid p} d_{v}
        = \frac{\log p \cdot [F_{\wp} : \mathbb{Q}_{p}]}{\rho [F : \mathbb{Q}]}.
    \end{equation*}
    
    By substituting the previous bounds into \eqref{eq:height_decomposition_split}, we obtain
    \begin{equation*} \label{eq:canonical_height_step1}
        \hat{h}(aP - b\sigma(P)) \geq \frac{\log p \cdot [F_{\wp} : \mathbb{Q}_{p}]}{\rho [F : \mathbb{Q}]} - C(j_{\mathcal{E}}).
    \end{equation*}
    
    By the properties of the canonical height, and the parallelogram law (see \cite[Theorem 9.3]{SilvermanArithmeticEC}), we have
    \begin{align*}
        \hat{h}(aP - b\sigma(P)) &= 2 \hat{h}(aP) + 2 \hat{h}(b\sigma(P)) - \hat{h}(aP + b\sigma(P)) \notag \\
        &= 2a^{2} \hat{h}(P) + 2b^{2} \hat{h}(P) - \hat{h}(aP + b\sigma(P)). \label{eq:parallelogram_law}
    \end{align*}
    Since \( \hat{h}(aP + b\sigma(P)) \geq 0 \), it follows that
    \begin{equation*} \label{eq:final_height_inequality}
        2(a^{2} + b^{2}) \hat{h}(P) \geq \frac{\log p \cdot [F_{\wp} : \mathbb{Q}_{p}]}{\rho [F : \mathbb{Q}]} - C(j_{\mathcal{E}}).
    \end{equation*}
    Dividing by \( 2(a^{2} + b^{2}) \) yields the desired inequality \eqref{eq:canonical_height_lower_bound}, completing the proof.
\end{proof}

The next step brings us close to Theorem~\ref{goal}, albeit under the additional assumption~\eqref{eq:tau_condition}.  
It can be viewed as the elliptic analogue of~\cite[Proposition 2.2]{Amoroso_David_Zannier-OnFieldWithPropertyB}.

\begin{prop} \label{Theorem intermediare}
    Let \( F \) be a number field and \( \mathcal{E} \) be an elliptic curve over $F$.  
    Suppose that \( \mathcal{E} \) has everywhere potential good reduction over \( F \), and let \( L/F \) be a finite abelian extension.  
    Let \( \wp \) be a prime ideal of \( F \) above a rational prime \( p \), and let \( q = N(\wp) \) be the norm of \( \wp \).  
    Let \( P \in \mathcal{E}(L) \setminus \mathcal{E}_{\mathrm{tors}} \), and suppose that for every \( \tau \in \mathrm{Gal}(L/F) \) of order \( p \), we have
    \begin{equation} \label{eq:tau_condition}
        \tau(P) - P \notin \mathcal{E}[p^{\infty}] \setminus \{0\}.
    \end{equation}
    Then, the canonical height satisfies
    \begin{equation*} \label{eq:height_lower_bound}
        \hat{h}(P) \geq \frac{1}{4q^{4}} \left( \frac{1}{d}\log q - C(j_{\mathcal{E}}) \right),
    \end{equation*}
    where \( d = [F:\mathbb{Q}] \) and
    \(
        C(j_{\mathcal{E}})
    \) is as in Lemma \ref{lemma minoration hauteur de neron},
    with \( j_{\mathcal{E}} \) being the \( j \)-invariant of \( \mathcal{E} \).
\end{prop}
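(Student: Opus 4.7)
My plan is to fix a prime $\mathfrak{P}$ of $L$ above $\wp$ and produce integers $a,b$ and a Galois automorphism $\sigma \in \mathrm{Gal}(L/F)$ such that $aP - b\sigma(P)$ is non-zero, reduces to the identity at every prime of $L$ above $\wp$, and satisfies $a^{2}+b^{2} \leq 2q^{4}$. Feeding these data into Lemma~\ref{lemma minoration hauteur de neron} with a suitable $\rho$ (coming from the formal-group lower bound in \cite[Lemma~2.1]{BakerMatthew}) then yields the announced inequality. A crucial point throughout is the abelianness of $L/F$: decomposition and inertia subgroups do not depend on the chosen prime above $\wp$, so congruences modulo $\mathfrak{P}$ arising from Galois-equivariant algebraic relations propagate to every other prime of $L$ above $\wp$.

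The case analysis follows the ramification of $\wp$. If $p$ divides the ramification index $e(\mathfrak{P}/\wp)$ (the \emph{wild} case), then the wild inertia contains an element $\tau$ of order $p$; by abelianness it lies in the inertia at every prime of $L$ above $\wp$, so $\tau(P)-P$ reduces to the identity at each such place. Provided $\tau(P)\neq P$ (otherwise $P$ descends to the fixed subfield of the wild inertia and we recurse), hypothesis~(\ref{eq:tau_condition}) guarantees that this difference is not $p$-power torsion, hence is a non-zero point of the formal group. Lemma~\ref{lemma minoration hauteur de neron} applied with $a=b=1$ and $\sigma=\tau$ then yields a bound even stronger than the one claimed.

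In the \emph{tame} case ($p \nmid e(\mathfrak{P}/\wp)$), I would take $\sigma \in D_{\mathfrak{P}}$ to be a Frobenius lift; using potential good reduction of $\mathcal{E}$ at $\wp$, $\sigma$ induces on $\tilde{\mathcal{E}}/\mathbb{F}_{q}$ the $q$-power Frobenius $\phi_{q}$, which satisfies $\phi_{q}^{2} - a_{q}\phi_{q} + q = 0$ with $|a_{q}| \leq 2\sqrt{q}$. Thus $\mathbb{Z}[\phi_{q}]$ is a rank-$2$ order in an imaginary quadratic field, and the annihilator $\mathfrak{I} \subset \mathbb{Z}[\phi_{q}]$ of $\tilde{P}$ is a full-rank sublattice of $\mathbb{Z}^{2}$ via the basis $\{1,\phi_{q}\}$. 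A Minkowski-type argument should produce a non-zero element $a - b\phi_{q} \in \mathfrak{I}$ with $a^{2}+b^{2} \leq 2q^{4}$, whence $aP - b\sigma(P) \equiv 0 \pmod{\mathfrak{P}}$; Galois equivariance of this algebraic relation, combined with the abelianness of $L/F$, transports it to every prime of $L$ above $\wp$, so Lemma~\ref{lemma minoration hauteur de neron} applies with $\rho = 1$.

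I expect the main difficulty to lie in the tame case, specifically in controlling $|H| := |\langle \tilde{P}, \phi_{q}(\tilde{P})\rangle|$, equivalently the index $[\mathbb{Z}^{2}:\mathfrak{I}]$: the naive bound $|H| \leq |\tilde{\mathcal{E}}(\mathbb{F}_{q^{f}})|$ blows up with the residue degree $f$ of $\mathfrak{P}$ over $\wp$, so the uniform estimate $a^{2}+b^{2} \leq 2q^{4}$ must be obtained by exploiting the quadratic norm form $a^{2} - a_{q}ab + qb^{2}$ on $\mathbb{Z}[\phi_{q}]$ together with the abelianness of $L/F$ (possibly by descending to an auxiliary subextension where the residue degree is small). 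A secondary verification is that $aP - b\sigma(P) \neq 0$ in $\mathcal{E}(L)$, which should follow from $P$ being non-torsion, with hypothesis~(\ref{eq:tau_condition}) ruling out degenerate relations between $P$ and $\sigma(P)$.
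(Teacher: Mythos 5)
Your proposal diverges substantially from the paper's proof and contains genuine gaps in both cases.

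\emph{Wild case.} You take $\tau$ in the wild inertia of order $p$ and feed $\tau(P)-P$ into Lemma~\ref{lemma minoration hauteur de neron} with $a=b=1$. But the lemma needs a uniform $\rho$ in the local inequality $\lambda_v(\tau(P)-P)\ge \frac{\log p}{\rho}$, and the conclusion only gives the announced bound when $\rho=e(\wp/p)$. Knowing merely that $\tau(P)-P$ lies in the formal group at level $1$ (in terms of the $\mathfrak{Q}$-adic valuation) gives $\rho$ of size about $e(\mathfrak{Q}/p)=e(\mathfrak{Q}/\wp)\,e(\wp/p)$, which grows without bound as $L$ varies, and the lower bound degenerates. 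The paper's argument is structured precisely to avoid this: it invokes the Hasse--Arf theorem (via \cite[Proposition~2.3]{bogomolovPourKab}) to bound the index $k$ of the first trivial higher ramification group by $k\ge e/q$, places $\tau$ in $G_{k-1}$ so that $\tau(P)-P\in\hat{\mathcal{E}}(\mathfrak{Q}^{k})$, and then multiplies by $q$ to push the point into $\hat{\mathcal{E}}(\mathfrak{Q}^{kq})\subseteq\hat{\mathcal{E}}(\wp\mathcal{O}_L)$, after which $\rho=e(\wp/p)$ is legitimate. Your claim that $a=b=1$ ``yields a bound even stronger than the one claimed'' is therefore not correct; the factor $a=b=q$ (hence the $4q^{4}$) is the price of the depth gained, and is not optional.

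\emph{Tame case.} Your route through the annihilator $\mathfrak{I}$ of $\tilde P$ in $\mathbb{Z}[\phi_q]$ plus Minkowski is not what the paper does, and — as you acknowledge yourself — the covolume $[\mathbb{Z}^2:\mathfrak{I}]$ is bounded only in terms of $|\tilde{\mathcal{E}}(\mathbb{F}_{q^f})|$, which grows with the residue degree $f$ of $\mathfrak{Q}$ over $\wp$, so the uniform estimate $a^{2}+b^{2}\le 2q^{4}$ is not established. The paper instead notes that tameness together with the Hasse--Arf bound gives $e\le q$, and then splits into two subcases: if some $\tau\in G_0$ moves $qP$, apply the Lemma to $q(\tau(P)-P)\in\hat{\mathcal{E}}(\mathfrak{Q}^{q})\subseteq\hat{\mathcal{E}}(\mathfrak{Q}^{e})$ with $a=b=q$; otherwise $qP$ descends to the unramified subextension $L^{G_0}$, and one uses a Frobenius lift $\phi$ with $q\cdot(qP)\equiv\phi(qP)\pmod{\wp\mathcal{O}_{L^{G_0}}}$, applying the Lemma to $qP$ with $a=q$, $b=1$. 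No lattice reduction is needed, and the ramification index is controlled from the start. The one point of overlap with the paper is your idea of recursing when $\tau(P)=P$ (descending to the fixed field), which does appear in the paper's wild case.
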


\begin{proof}
    Let $\mathcal{O}_{L}$ be the ring of integers of $L$. Let \( \mathfrak{Q} \) be a prime ideal of \( \mathcal{O}_{L} \) over \( \wp \).  
    We define the decomposition group and ramification groups as follows:
    \begin{align*}
        G_{-1} &= G_{-1}(\mathfrak{Q} / \wp) = \{ \sigma \in \mathrm{Gal}(L/F) \mid \sigma(\mathfrak{Q}) = \mathfrak{Q} \}, \\
        G_k &= G_k(\mathfrak{Q} / \wp) = \{ \sigma \in \mathrm{Gal}(L/F) \mid \forall \gamma \in \mathcal{O}_L, \sigma \gamma \equiv \gamma \bmod \mathfrak{Q}^{k+1} \}.
    \end{align*}
    These form the filtration of subgroups:
    \[
        \mathrm{Gal}(L/F) \supseteq G_{-1} \supseteq G_0 \supseteq G_1 \supseteq \dots
    \]
    Moreover, for all \( k \geq 0 \), \( G_k \) is a normal subgroup of \( G_{-1} \) (see \cite[§4, Proposition 1, p. 62]{Hasse-ArfTheorem}).  
    Writing \( e := |G_0| = e(\mathfrak{Q}/\wp) = e_0 p^a \), where \( e(\mathfrak{Q}/\wp) \) is the ramification index of $\mathfrak{Q}$ over $\wp$ and \( \gcd(e_0, p) = 1 \), we have \( |G_0 / G_1| = e_0 \) (see \cite[§4, Corollary 1 an 3, p. 67]{Hasse-ArfTheorem}).

    Notice that from the proof of \cite[Proposition 2.3]{bogomolovPourKab} which rests on the Hasse-Arf Theorem \cite[§7, Theorem 1, p. 101]{Hasse-ArfTheorem}, the index \( k \) of the first trivial higher ramification group \( G_k \) satisfies
    \begin{equation} \label{eq:ramification_bound}
        k \geq \frac{e}{q}.
    \end{equation}
    
     \textbf{Case 1:} We suppose that \( \wp \) is unramified or tamely ramified in \( L \)  
    In this case, \( G_1 \) is trivial, so from \eqref{eq:ramification_bound}, we obtain \( e = e_0 \leq q \).  

    Suppose that there exists \( \tau \in G_0 \) such that \( qP \neq q\tau(P) \).    
   Since \( \tau(P) - P \in \mathcal{E}(\mathfrak{Q}) \), meaning that \( \tau(P)-P \) lies in the kernel of the reduction map
\(
E(\mathcal{O}_{L}) \longrightarrow E(\mathcal{O}_{L}/\mathfrak{Q}),
\)
in the sense of \cite[Chap.~VII.2]{SilvermanArithmeticEC}, and since \( e \leq q \), we obtain
\begin{equation*}
    q\left(\tau(P) - P\right) \in \mathcal{E}(\mathfrak{Q}^{q}) \subseteq \mathcal{E}(\mathfrak{Q}^{e}).
\end{equation*}

    Thus,
    \begin{equation*} \label{eq:neron_lower_bound}
        q(\tau(P) - P) \in \mathcal{E}( \wp\mathcal{O}_{L}).
    \end{equation*}
    Let \( v \in M_{F}^{0} \) be the place of $F$ associated to \( \wp \).  
    By \cite[Remark VI.4.1.1]{AdvancedTopicEllipticCurve}, we obtain
    \begin{equation*}
        \lambda_{v}(qP - q\tau(P)) \geq \frac{\log p}{e(\wp/p)}
    \end{equation*}
    where $e(\wp/p)$ is the ramification index of $\wp$ over $p$.
    
    Applying Lemma \ref{lemma minoration hauteur de neron} with \( a = b = q \) and \( \rho = e(\wp/p) \), we obtain
\begin{equation*} 
        \hat{h}(P) \geq \frac{1}{4q^{2}} \left(  \frac{ \log p \cdot [F_{v} : \mathbb{Q}_{v}]}{e(\wp/p)[F : \mathbb{Q}]} - C(j_{\mathcal{E}}) \right).
    \end{equation*}
Since $p^{ [F_{v} : \mathbb{Q}_{v}]/e(\wp/p)}=q$, and recalling that $d=[F:\Q]$, we get
    
    \begin{equation*} \label{eq:height_bound_case1}
       \hat{h}(P) \geq \frac{1}{4q^{2}} \left( \frac{1}{d}\log q - C(j_{\mathcal{E}}) \right).
    \end{equation*}

    Suppose now that for all \( \tau \in G_0 \), \( qP = \tau(qP) \). 
    Then, \( qP \in \mathcal{E}(L^{G_0}) \) and \( L^{G_0}/F \) is unramified at \( \wp \), where $L^{G_0}$ denote the subfield of $L$ fixed by the elements of $G_0$.  
    Let \( \phi \in G_{-1} \) be a lifting of the Frobenius automorphism. The extension $L/F$ is abelian so $\phi$ does not depend of the choice of the ideal $\mathfrak{Q}$ above $\wp$.  We have $$\forall \gamma \in \mathfrak{O}_{L^{G_{0}}}, \; \gamma^{q} \equiv \phi(\gamma) \mod \wp \mathfrak{O}_{L^{G_{0}}}.$$ Thus,
    \begin{equation*}
        q(qP) - \phi(qP) \in \mathcal{E}(\wp\mathcal{O}_{L^{G_0}}).
    \end{equation*}
    Since \( q(qP) \neq \phi(qP) \), otherwise we would have \( q^{4}\hat{h}(P)=\hat{h}(q(qP))=\hat{h}(\phi(qP))=q^{2}\hat{h}(P) \) which is a contradiction, we conclude that
    \begin{equation*}
        \lambda_{v}(q(qP) - \phi(qP)) \geq \frac{\log p}{e(\wp/p)}.
    \end{equation*}
    As \( \hat{h}(qP) = q^{2} \hat{h}(P) \), applying Lemma \ref{lemma minoration hauteur de neron} to the point $qP\in \mathcal{E}(L)$ with \( a = q \), \( b = 1 \), and \( \rho = e(\wp/p) \), we obtain
    \begin{equation*} 
        \hat{h}(P) \geq \frac{1}{2q^{2}(q^{2}+1)} \left( \frac{1}{d}\log q - C(j_{\mathcal{E}}) \right).
    \end{equation*}

     \textbf{Case 2:} Suppose that the prime ideal \( \wp \) is wildly ramified in \( L \).  
Let \( k \) be the index of the first trivial higher ramification group \( G_k \), so that \( G_{k-1} \) is an elementary \( p \)-group.  

Consider an element \( \tau \in G_{k-1} \) of order \( p \).  
If \( \tau(P) = P \), then \( P \in \mathcal{E}(L^{\tau}) \), where $L^{\tau}$ is the subfield of $L$ fixed by $\tau$, thus \( L^{\tau}/F \) is a proper abelian subextension of \( L/F \), allowing us to proceed by induction on the degree. 

Otherwise, since \( \tau(P) \neq P \) and since by \eqref{eq:tau_condition} \( \tau(P) - P \notin \mathcal{E}[p^{\infty}] \setminus \{0\} \), we conclude that
  \begin{equation*} 
      q(\tau(P) - P) \neq 0.
  \end{equation*}

By definition of \( \tau \) and by \eqref{eq:ramification_bound}, we also have
  \begin{equation*}
      q(\tau(P) - P) \in \mathcal{E}(\mathfrak{Q}^{kq}) \; \mbox{ with } \; kq\ge e .
  \end{equation*}
Applying Lemma \ref{lemma minoration hauteur de neron} with \( a = b = q \) and \( \rho = e(\wp/p) \), we get
    \begin{equation*} 
       \hat{h}(P) \geq \frac{1}{4q^{2}} \left( \frac{1}{d}\log q - C(j_{\mathcal{E}}) \right).
    \end{equation*}
\end{proof} 

One of the main challenges is to remove assumption~\eqref{eq:tau_condition}.  
A crucial step is provided by the following proposition.

\begin{prop} \label{question}
Let $\mathcal{E}$ be an elliptic curve with CM defined over a number field $F$, and let $q' = p^{\ell}$ be a power of a prime number $p$. Consider the natural Galois representation
\[
\rho: \mathrm{Gal}(F(\mathcal{E}[q'])/F) \longrightarrow \mathrm{Aut}(\mathcal{E}[q']) \simeq \mathrm{GL}_{2}(\Z/q'\Z).
\]
Then there exist $\sigma_1, \sigma_2 \in \mathrm{Gal}(F(\mathcal{E}[q'])/F)$ such that
\[
\rho(\sigma_i) = g_i I_2 \quad \text{for } i = 1, 2, \quad \text{with } 2 < g_2 - g_1 < 36[F : \Q] + 6,
\]
where $g_1,g_2 \in \Z$ and $I_2$ denotes the $2 \times 2$ identity matrix.

\end{prop}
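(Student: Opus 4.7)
The plan is to exploit the commutativity of the CM Galois representation to produce many Galois elements acting as integer scalars on $\mathcal{E}[q']$, and then to conclude by pigeonhole. Let $K$ denote the imaginary quadratic CM field of $\mathcal{E}$ and $\mathcal{O}$ the order of complex multiplication. The first step is to pass to the compositum $F' := F K$. Since $[F':F] \leq 2$, we have $[F':\Q] \leq 2d$; restricted to $\mathrm{Gal}(F(\mathcal{E}[q'])/F')$, the representation $\rho$ takes values in the commutative subgroup $(\mathcal{O}/q'\mathcal{O})^* \subseteq \mathrm{GL}_2(\Z/q'\Z)$, because each such Galois element commutes with the $\mathcal{O}$-action now defined over $F'$.

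Next, define the subgroup
\[
H := \bigl\{g \in (\Z/q'\Z)^* : g \cdot I_2 \in \rho(\mathrm{Gal}(F(\mathcal{E}[q'])/F))\bigr\}.
\]
The key technical step is to bound the index of $H$ in $(\Z/q'\Z)^*$ linearly in $d$. The inputs are the Weil pairing identity $\det \circ \rho = \chi_{q'}$ combined with $[F(\mu_{q'}) : F] \geq \phi(q')/d$, which forces the image of $\chi_{q'}$ to have index at most $d$ in $(\Z/q'\Z)^*$. Together with the commutative image of $\rho$ inside $(\mathcal{O}/q'\mathcal{O})^*$ and the constraint imposed by the norm map $\alpha \mapsto \alpha \bar{\alpha}$ (whose restriction to the scalars is the squaring map), this should yield a bound of the form $[(\Z/q'\Z)^* : H] \leq 6d + 1$, where the factor $6$ reflects the bound $|\mathcal{O}^\times| \leq 6$ on the units of an imaginary quadratic order. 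Establishing this index bound, while keeping the constants sharp enough to reach the final $36d+6$, is the main obstacle of the proof.

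With the index bound in hand, we conclude by pigeonhole. Since $|H| \geq \phi(q')/(6d+1)$ and $q'/\phi(q') \leq p/(p-1) \leq 2$ for prime powers $q' = p^\ell$, dividing $\{1,\ldots,q'\}$ into subintervals of length at most $36d+6$ forces some subinterval to contain at least four integer representatives of $H$. Taking $g_1 < g_2$ to be the smallest and largest among such four consecutive representatives, we get $g_2 - g_1 \geq 3 > 2$ (since four distinct positive integers span at least three) and $g_2 - g_1 < 36d + 6$ by construction; the corresponding $\sigma_1, \sigma_2 \in \mathrm{Gal}(F(\mathcal{E}[q'])/F)$ satisfy $\rho(\sigma_i) = g_i I_2$, completing the proof.
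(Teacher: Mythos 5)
Your overall skeleton — pass to $F' = FK$, observe the image lands in the abelian group $(\mathcal{O}/q'\mathcal{O})^{\times}$, define $H$ as the set of scalars in the image, bound $[(\Z/q'\Z)^{\times} : H]$ linearly in $d$, and finish with a pigeonhole argument — matches the paper's. The pigeonhole step at the end is essentially what \cite[Lemma 4.1]{Amoroso_David_Zannier-OnFieldWithPropertyB} packages, so that part is fine in spirit (though to reliably get four representatives in a single window you need the count done more carefully than ``average $\geq 3$'').

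The genuine gap is the middle step: the mechanism you propose for bounding $[(\Z/q'\Z)^{\times} : H]$ does not work. Controlling the index of $\det(G)$ in $(\Z/q'\Z)^{\times}$ via the Weil pairing and $[F(\mu_{q'}):F] \geq \phi(q')/d$ tells you the norm $N : (\mathcal{O}/q'\mathcal{O})^{\times} \to (\Z/q'\Z)^{\times}$ has large image on $G$, but this gives no control on $G \cap (\Z/q'\Z)^{\times}$. To see the obstruction concretely: in the split case $(\mathcal{O}/q'\mathcal{O})^{\times} \simeq (\Z/q'\Z)^{\times} \times (\Z/q'\Z)^{\times}$, the subgroup $G = \{(a,1)\}$ has full norm image yet meets the diagonal (the scalars) trivially. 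The norm/determinant by itself simply cannot rule this out; the restriction of the norm to scalars being squaring, and $|\mathcal{O}^{\times}| \leq 6$, are red herrings here. What is actually needed — and what the paper uses — is a bound on the index of the \emph{entire} image $G$ inside $(\mathcal{O}/q'\mathcal{O})^{\times}$, not just on its determinant. This comes from the explicit open-image theorem for CM elliptic curves: Serre's theorem \cite[Corollaire, p.302]{JPserreGaloisImage} made quantitative in \cite[Theorem 1.5]{TheseDavide}, giving $[(\mathcal{O}_K/q'\mathcal{O}_K)^{\times} : G] \leq 3[F':\Q]$. Once you have that, the injection
\[
\bigl\{\lambda I_2\bigr\}\big/\bigl\{g I_2 : g \in H\bigr\} \hookrightarrow (\mathcal{O}_K/q'\mathcal{O}_K)^{\times}/G
\]
immediately gives $[(\Z/q'\Z)^{\times} : H] \leq 3[F':\Q] \leq 6d$, after which the pigeonhole lemma applies. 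Without this deeper input (or an equivalent), the index bound on $H$ is not established and the proof does not close.
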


\begin{proof}
   Let \( K \) be the complex multiplication field of the elliptic curve \( \mathcal{E} \), i.e. $\mathrm{End}(\mathcal{E})$ is an order in the imaginary quadratic field $K$. Let $\mathcal{O}_{K}$ be the ring of integers of $K$ ($\mathrm{End}(\mathcal{E}) \subset \mathcal{O}_{K}$), and define
\begin{align*}
    C(q') &:= \left(\mathcal{O}_K \otimes_{\mathbb{Z}} \mathbb{Z}/q'\mathbb{Z}\right)^{\times} \\
    &\simeq (\mathcal{O}_K / q'\mathcal{O}_K)^{\times} 
\end{align*}

viewed as a subgroup of \( \mathrm{GL}_{2}(\mathbb{Z}/q'\mathbb{Z}) \).

We consider the compositum $F'=FK$. It is then sufficient to show that the image of the Galois representation over \( F' \), i.e. $G := \rho\left(\mathrm{Gal}\left(F'(\mathcal{E}[q'])/F'\right)\right)$, contains two homotheties satisfying the desired condition.

Since \( K \subset F' \), by \cite[Corollaire on p.302]{JPserreGaloisImage}, we have that
\[
G \subset C(q').
\]
Since \( C(q') \) is abelian, \( G \) is a normal subgroup of \( C(q') \) and so we can consider the quotient group \( C(q')/G \).

Let $H=:\{ g \in \left( \Z/q'\Z \right)^{\times}, \; gId \in G \}$.
There is a natural injective group homomorphism
\[
\left\{ \lambda \cdot \mathrm{Id} \mid \lambda \in (\mathbb{Z}/q'\mathbb{Z})^{\times} \right\}/{ \left\{ g \cdot \mathrm{Id} \mid g \in H \right\} } \hookrightarrow C(q')/G,
\]
obtained by composing the obvious inclusion
\[
\left\{ \lambda \cdot \mathrm{Id} \mid \lambda \in (\mathbb{Z}/q'\mathbb{Z})^{\times} \right\} \hookrightarrow C(q')
\]
with the projection \( C(q') \twoheadrightarrow C(q')/G \), whose kernel is by definition \[ \{ g \cdot \mathrm{Id} \mid g \in H \}. \]

It follows that
\[
[(\mathbb{Z}/q'\mathbb{Z})^{\times} : H] = \left| \frac{\left\{ \lambda \cdot \mathrm{Id} \mid \lambda \in (\mathbb{Z}/q'\mathbb{Z})^{\times} \right\}}{ \left\{ g \cdot \mathrm{Id} \mid g \in H \right\} } \right| \le [C(q') : G] \le 3 [F' : \mathbb{Q}],
\]
where the last inequality is a direct consequence of \cite[Theorem 1.5 point 1.]{TheseDavide}, which makes completely explicit the result of Serre \cite[Corollaire on p.302]{JPserreGaloisImage}.

Applying \cite[Lemma 4.1]{Amoroso_David_Zannier-OnFieldWithPropertyB}, and noticing that $[F':\Q]\le 2[F:\Q]$, yields the existence of elements \( g_1, g_2 \in H \) such that
\[
2 < g_2 - g_1 < 36 [F : \mathbb{Q}] + 6,
\]
thus concluding the proof.

\end{proof}

\section{Proof of Theorem \ref{goal}} \label{proof of goal}
We now have all the tools needed to prove our main result.

\begin{proof}[Proof of Theorem \ref{goal}]
  Let $p$ be a prime number which will be fixed later. We retain the notations from Theorem \ref{goal} and let $q' = p^{\ell}$ such that
  \[
  L \cap \Q(\mathcal{E}[p^{\infty}]) \subset L \cap \Q(\mathcal{E}[q']),
  \]
  where $L$ is a finite abelian extension of $F$, and $\mathcal{E}[p^{\infty}]$ denotes the group of torsion points of~$\mathcal{E}$ 
of order a power of~$p$.
By Proposition \ref{question} there exist $\sigma_1, \sigma_2 \in \mathrm{Gal}(F(\mathcal{E}[q'])/F)$ such that
  \[
  \rho(\sigma_i) = g_i I_2 \quad \text{for } i = 1, 2, \quad \text{with } 2 < g_2 - g_1 < 36d+6.
  \]
  Let $\tilde{\sigma}_i \in \mathrm{Gal}(L(\mathcal{E}[q'])/F)$ be extensions of $\sigma_i$, and let $P \in \mathcal{E}(L) \setminus \mathcal{E}_{\mathrm{tors}}$. Define
  \[
  Q := \tilde{\sigma}_2(P) - \tilde{\sigma}_1(P) - [\,g_2 - g_1\,] P.
  \]
 We first want to show that $Q$ satisfies the hypothesis of Proposition \ref{Theorem intermediare}.

  Suppose, by contradiction, that $Q \in \mathcal{E}_{\mathrm{tors}}$. Then, by the usual properties of the Néron–Tate height,
  \[
  \hat{h}(\tilde{\sigma}_1(P) - \tilde{\sigma}_2(P)) = (g_2 - g_1)^2 \hat{h}(P).
  \]
  However, the parallelogram law also gives
  \[
  \hat{h}(\tilde{\sigma}_1(P) - \tilde{\sigma}_2(P)) + \hat{h}(\tilde{\sigma}_1(P) + \tilde{\sigma}_2(P)) = 2\hat{h}( \sigma_1(P))+2\hat{h}( \sigma_2(P)) = 4 \hat{h}(P),
  \]
  implying
  \[
  \hat{h}(\tilde{\sigma}_1(P) + \tilde{\sigma}_2(P)) = (4 - (g_2 - g_1)^2) \hat{h}(P).
  \]
  Since $2 < g_2 - g_1$ and $P \notin \mathcal{E}_{\mathrm{tors}}$, the right-hand side is negative, which gives a contradiction. Therefore, $Q \notin \mathcal{E}_{\mathrm{tors}}$.

  Suppose now, by contradiction, that there exists $\tau \in \mathrm{Gal}(L/F)$ such that
  \[
  \tau(Q) - Q \in \mathcal{E}[p^{\infty}] \setminus \{0\}.
  \]
  Set $T := \tau(Q) - Q$ and $R := \tau(P) - P$. Let $K$ be the complex multiplication field of the elliptic curve $\mathcal{E}$. Let's assume for the moment that $F$ contains the Hilbert class field of $K$. This ensures by \cite[Theorem II.2.3]{AdvancedTopicEllipticCurve} that the group $\mathrm{Gal}(L(E[q'])/F)$ is abelian.

Therefore, we have
  \begin{align*}
  T &= \tau \left(\tilde{\sigma}_2(P) - \tilde{\sigma}_1(P) - [\,g_2 - g_1\,] P \right)-\tilde{\sigma}_2(P) - \tilde{\sigma}_1(P) - [\,g_2 - g_1\,] P \\
  &=\tilde{\sigma}_2(\tau(P) - P) - \tilde{\sigma}_1(\tau(P) - P) - [\,g_2 - g_1\,](\tau(P) - P) \\
  &=\tilde{\sigma}_2(R) - \tilde{\sigma}_1(R) - [\,g_2 - g_1\,] R.
  \end{align*}
  Since $T \in \mathcal{E}_{\mathrm{tors}}$, the same arguments as above yields that we must have $\hat h(R)=0$ and so $R \in \mathcal{E}_{\mathrm{tors}}$.
  Write $R = R_1 + R_2$, where $R_1$ is a torsion point of order prime to $p$, and $R_2 \in \mathcal{E}[p^{\infty}]$ of order $p^k$ for some $k \in \Z$. By Bézout's identity, there exist $u, v \in \Z$ such that $mv + p^k u = 1$, so
  \[
  R_2 = (1 - p^k u) R_2 = mv R \in \mathcal{E}(\Q(R)) \subset \mathcal{E}(L \cap \Q(\mathcal{E}[p^{\infty}])) \subset \mathcal{E}(F(\mathcal{E}[q'])).
  \]
  Since, by the choice of $\sigma_1$ and $\sigma_2$, we have 
$\sigma_1(S) = g_1 S$ and $\sigma_2(S) = g_2 S$ for every 
$S \in \mathcal{E}(F(\mathcal{E}[q']))$, it follows that
\[
  \tilde{\sigma}_2(R_2) \;-\; \tilde{\sigma}_1(R_2) \;-\; [\,g_2 - g_1\,] R_2 
  \;=\; 0,
\]

  and so
  \[
  T = \tilde{\sigma}_2(R_1) - \tilde{\sigma}_1(R_1) - [\,g_2 - g_1\,] R_1.
  \]
  Thus, $T$ is a torsion point of order prime to $p$, contradicting that $T \in \mathcal{E}[p^{\infty}] \setminus \{0\}$. Hence, $Q$ satisfies the hypotheses of Proposition \ref{Theorem intermediare}. 
  
  Let $\wp$ be a prime ideal of $F$ lying above $p$ and $q=N(\wp)$.  Noticing that $\mathcal{E}$ has everywhere potential good reduction, due to the complex multiplication \cite[Theorem II.6.4]{AdvancedTopicEllipticCurve}, we have by Proposition \ref{Theorem intermediare} that
  \begin{align*}
  \hat{h}(Q) \geq \frac{1}{4 q^4} \left(\frac{1}{d} \log q -C(j_{\mathcal{E}})\right),
  \end{align*}
  where $d=[F:\Q]$.

  We also have by the properties of the canonical height ( \cite[Theorem 9.3]{SilvermanArithmeticEC} ),
  \begin{align*}
  \hat{h}(Q)&= \hat{h}(\tilde{\sigma}_2(P) - \tilde{\sigma}_1(P) - [\,g_2 - g_1\,] P)  \\& \leq 2(\hat{h}(\tilde{\sigma}_2(P) - \tilde{\sigma}_1(P))+\hat{h}( [\,g_2 - g_1\,] P))\\
  &\leq 2(4 + (g_2 - g_1)^2) \hat{h}(P),
  \end{align*}
  hence, recalling that $2<g_{2}-g_{1}<36d+6$,
  \begin{align*}
  \hat{h}(P) &\geq \left(8(4 + (g_2 - g_1)^2) q^4\right)^{-1}  \left(\frac{1}{d} \log q -C(j_{\mathcal{E}})\right)\\
  &\geq \left((5+27d+162d^2)4^{3}q^4 \right)^{-1} \left(\frac{1}{d} \log q -C(j_{\mathcal{E}})\right).
  \end{align*}

  Now, choose a prime number $p$ such that
  \[
  2e^{dC(j_{\mathcal{E}})} \leq p < 4e^{dC(j_{\mathcal{E}})}.
  \]
   \noindent Then
$$
2e^{dC(j_E)}\leq p\leq q\leq p^d<4^de^{d^2C(j_E)}
$$
and thus 
$$
\tfrac1d\log q - C(j_E)\geq \tfrac1d\log 2\geq \tfrac1{4d}.
$$
Therefore, taking into account $(5+27d+162d^2)d \leq 2^{8d}$ and $e^4\leq 2^6$,
\begin{align*}
\hat{h}(P)
&\geq\big((5+27d^2+162d^3)4^3q^4\big)^{-1}\big(\tfrac1d\log q - C(j_E)\big)\\
&\geq\big((5+27d^2+162d^3)d4^{4d+4}e^{4d^2C(j_E)}\big)^{-1}\\
&\geq 2^{-16d-8-6d^2C(j_E)}.
\end{align*}
By definition of $C(j_E)$ we have 
$$
2^{6C(j_E)}=2^{\log 2+ 22/3+h(j_E)}\leq 2^{9+h(j_E)}.
$$
Thus we finally get:
$$
\hat{h}(P)\geq  2^{-(9+h(j_E))d^2-16d-8}.
$$
 Now, if the field $F$ does not contain the Hilbert class field of $K$, 
we may replace $F$ by its compositum with $K$, i.e. $FK$. 
Indeed, since the elliptic curve $\mathcal{E}$ is defined over $F$, 
the field $F$ contains its $j$-invariant. 
Consequently, $FK$ contains $K(j_{\mathcal{E}})$, 
which, by \cite[Theorem~II.4.1]{AdvancedTopicEllipticCurve}, is the Hilbert class field of $K$. 
Thus, the preceding argument can be carried out over this field.
Since $K$ is quadratic, it follows that
\begin{align*}
    [F K : \mathbb{Q}]
        &\leq [F : \mathbb{Q}][K : \mathbb{Q}] \\
        &= 2d.
\end{align*}

Hence,
\begin{align*}
  \hat{h}(P) &\ge 2^{-(9+h(j_{\mathcal{E}}))(2d)^{2}-32d-8} \\
  &= 2^{-4(9+h(j_{\mathcal{E}}))d^{2}-32d-8},
\end{align*}
which completes the proof.

\end{proof}

\section*{Acknowledgment}

The author would like to thank Samuel Le Fourn and Davide Lombardo for providing the proof of Proposition \ref{question} and for valuable discussions, and also Gabriel Dill for pointing out several inaccuracies.

\bibliographystyle{alpha}
\bibliography{bibliography}

@Book{AdvancedTopicEllipticCurve,
 Author = {Silverman, Joseph H.},
 Title = {Advanced topics in the arithmetic of elliptic curves},
 FSeries = {Graduate Texts in Mathematics},
 Series = {Grad. Texts Math.},
 ISSN = {0072-5285},
 Volume = {151},
 ISBN = {0-387-94325-0},
 Year = {1994},
 Publisher = {New York, NY: Springer-Verlag},
 Language = {English},
 Keywords = {14H52,14G40,11-02,11Gxx,14-02,11G07,11G05,14G10},
 zbMATH = {706265},
 Zbl = {0911.14015}
}

@Article{BakerMatthew,
 Author = {Baker, Matthew H.},
 Title = {Lower bounds for the canonical height on elliptic curves over abelian extensions},
 FJournal = {IMRN. International Mathematics Research Notices},
 Journal = {Int. Math. Res. Not.},
 ISSN = {1073-7928},
 Volume = {2003},
 Number = {29},
 Pages = {1571--1589},
 Year = {2003},
 Language = {English},
 DOI = {10.1155/S1073792803212083},
 Keywords = {11G50,11G10,14K15},
 zbMATH = {1925838},
 Zbl = {1114.11058}
}

@book{bombieri2006heights,
  title={Heights in Diophantine geometry},
  author={Bombieri, Enrico and Gubler, Walter},
  year={2006},
  publisher={Cambridge university press}
}

@Article{bogomolovPourKab,
 Author = {Amoroso, Francesco and Zannier, Umberto},
 Title = {A uniform relative {Dobrowolski}'s lower bound over {Abelian} extensions},
 FJournal = {Bulletin of the London Mathematical Society},
 Journal = {Bull. Lond. Math. Soc.},
 ISSN = {0024-6093},
 Volume = {42},
 Number = {3},
 Pages = {489--498},
 Year = {2010},
 Language = {English},
 DOI = {10.1112/blms/bdq008},
 Keywords = {11G50,11R47,11J25},
 zbMATH = {5725887},
 Zbl = {1205.11079}
}

@Book{Hasse-ArfTheorem,
 Author = {Serre, Jean-Pierre},
 Title = {Corps locaux},
 Edition = {4th corrected ed.},
 ISBN = {2-7056-1296-3},
 Year = {2004},
 Publisher = {Paris: Hermann, {\'E}diteurs des Sciences et des Arts},
 Language = {French},
 Keywords = {11Sxx,11-01,11-02,14Gxx},
 zbMATH = {5050432},
 Zbl = {1095.11504}
}

@article{TheseDavide,
 author = {Lombardo, Davide},
 title = {Galois representations attached to abelian varieties of {CM} type},
 fjournal = {Bulletin de la Soci{\'e}t{\'e} Math{\'e}matique de France},
 journal = {Bull. Soc. Math. Fr.},
 issn = {0037-9484},
 volume = {145},
 number = {3},
 pages = {469--501},
 year = {2017},
 language = {English},
 doi = {10.24033/bsmf.2745},
 keywords = {14K22,11F80,11G10},
 url = {hdl.handle.net/11568/917353},
 zbMATH = {6859835},
 Zbl = {1390.14137}
}

@Article{Amoroso_David_Zannier-OnFieldWithPropertyB,
 Author = {Amoroso, Francesco and David, Sinnou and Zannier, Umberto},
 Title = {On fields with property ({B})},
 FJournal = {Proceedings of the American Mathematical Society},
 Journal = {Proc. Am. Math. Soc.},
 ISSN = {0002-9939},
 Volume = {142},
 Number = {6},
 Pages = {1893--1910},
 Year = {2014},
 Language = {English},
 DOI = {10.1090/S0002-9939-2014-11925-3},
 Keywords = {11G50,11R32,12E30},
 zbMATH = {6300517},
 Zbl = {1294.11112}
}

@article{JPserreGaloisImage,
 author = {Serre, Jean-Pierre},
 title = {Galois properties of points of finite order of elliptic curves},
 fjournal = {Inventiones Mathematicae},
 journal = {Invent. Math.},
 issn = {0020-9910},
 volume = {15},
 pages = {259--331},
 year = {1972},
 language = {French},
 doi = {10.1007/BF01405086},
 keywords = {11G05,14H25,14G05},
 url = {https://eudml.org/doc/142133},
 zbMATH = {3372166},
 Zbl = {0235.14012}
}

@article{Amoroso-Dvornicich,
 author = {Amoroso, Francesco and Dvornicich, Roberto},
 title = {A lower bound for the height in abelian extensions},
 fjournal = {Journal of Number Theory},
 journal = {J. Number Theory},
 issn = {0022-314X},
 volume = {80},
 number = {2},
 pages = {260--272},
 year = {2000},
 language = {English},
 doi = {10.1006/jnth.1999.2451},
 keywords = {11R06,11R18,11R20,11G50,11R29},
 zbMATH = {1423159},
 Zbl = {0973.11092}
}

@article{Amoroso-Zannier,
 author = {Amoroso, Francesco and Zannier, Umberto},
 title = {A relative {Dobrowolski} lower bound over abelian extensions},
 fjournal = {Annali della Scuola Normale Superiore di Pisa. Classe di Scienze. Serie IV},
 journal = {Ann. Sc. Norm. Super. Pisa, Cl. Sci., IV. Ser.},
 issn = {0391-173X},
 volume = {29},
 number = {3},
 pages = {711--727},
 year = {2000},
 language = {English},
 keywords = {11G50,11J99},
 url = {https://eudml.org/doc/84424},
 zbMATH = {1846824},
 Zbl = {1016.11026}
}

@article{Silverman,
 author = {Silverman, Joseph H.},
 title = {A lower bound for the canonical height on elliptic curves over abelian extensions.},
 fjournal = {Journal of Number Theory},
 journal = {J. Number Theory},
 issn = {0022-314X},
 volume = {104},
 number = {2},
 pages = {353--372},
 year = {2004},
 language = {English},
 doi = {10.1016/j.jnt.2003.07.001},
 keywords = {11G05,11G10,11G50,14G25,14K15},
 zbMATH = {2056175},
 Zbl = {1053.11052}
}

@article{SilvermanBaker,
 author = {Baker, Matthew H. and Silverman, Joseph H.},
 title = {A lower bound for the canonical height on abelian varieties over abelian extensions},
 fjournal = {Mathematical Research Letters},
 journal = {Math. Res. Lett.},
 issn = {1073-2780},
 volume = {11},
 number = {2-3},
 pages = {377--396},
 year = {2004},
 language = {English},
 doi = {10.4310/MRL.2004.v11.n3.a10},
 keywords = {11G50,11G10,14K15},
 zbMATH = {2123937},
 Zbl = {1060.11041}
}

@book{SilvermanArithmeticEC,
 author = {Silverman, Joseph H.},
 title = {The arithmetic of elliptic curves},
 edition = {2nd ed.},
 fseries = {Graduate Texts in Mathematics},
 series = {Grad. Texts Math.},
 issn = {0072-5285},
 volume = {106},
 isbn = {978-0-387-09493-9; 978-0-387-09494-6},
 year = {2009},
 publisher = {New York, NY: Springer},
 language = {English},
 doi = {10.1007/978-0-387-09494-6},
 keywords = {11-01,11G05,14-01,11G07,11G10,11G20,11G40,14H52,14G05,11Gxx,14H25,11Y16,20J06},
 zbMATH = {5549721},
 Zbl = {1194.11005}
}

\end{document}